\newtheorem{theorem}{Theorem}[section]
\newtheorem{lemma}[theorem]{Lemma}
\theoremstyle{definition}
\newtheorem{definition}[theorem]{Definition}
\theoremstyle{remark}
\numberwithin{equation}{section}
\def\R{\mathcal{R}}
\def\S{\mathfrak{S}}
\def\Z{\mathbb{Z}}
\def\N{\mathbb{N}}
\def\zmin{\mu}
\def\zmax{\nu}
\def\dmax{\kappa}
\def\hexpl#1{\hbox{ $\langle$\textit{#1}$\rangle$}}
\begin{document}

\title{Two Useful Facts About Generating Functions}


\author{Alex Kasman}
\address{Department of Mathematics / College of Charleston /
  Charleston SC 29401}
\email{kasmana@cofc.edu}

\author{Robert Milson}
\address{Department of Mathematics and Statistics, Dalhousie
  University, Halifax, NS B3H~3J5}
\email{rmilson@dal.ca}

\subjclass[2020]{Primary 47B39}

\date{\today}

\begin{abstract}
Sequences are often conveniently encoded in the form of a generating
function depending on a formal variable.  
This note presents two observations that allow one to draw conclusions
about the generated sequence from the generating function.  The first
constructively produces ``recursion relations'' for the sequence from
differential operators in the formal variable having the generating function as an
eigenfunction.  The second allows one to determine whether the
sequence is orthogonal with respect to some inner product by
considering the result of taking the inner product of the generating
function with itself. Examples
presented to demonstrate the use and value of these methods
include a sequence of numbers, 
a family of Exceptional Hermite Polynomials, and an example
illustrating the result in a non-commutative setting.

\medskip

\raggedright
\noindent\textit{This paper is the record of a talk given by the first author at the AMS  Special Session on Orthogonal Polynomials and their Applications at the JMM held in Boston, MA in January 2023.  A version of it was first published by the American Mathematical Society as Contemporary Mathematics 822 (2025) pp.~155-166.}

\end{abstract}

\maketitle

\section{Introduction}

It is often convenient to encode an infinite sequence
 $\{a_0,a_1,a_2,\ldots\}$ in the form of a generating function
 $\psi(z)$ which has the sequence as the coefficients in a power
 series representation.
 In particular, when $\psi(z)$ is an ordinary generating function
 which can be written in a closed form that is
 analytic at $z=0$ it is then possible to recover elements of the
 sequence as
 $$
 a_n=\frac{1}{n!}\frac{d^n}{dz^n}\psi(z)\bigg|_{z=0}.
 $$
 
Generating functions are common in many areas of pure and
 applied mathematics, with the variable $z$ usually seen as nothing more
 than a formal variable with no function aside from encoding the sequence.  As George P\'olya wrote

 \medskip
 
 \begin{quote}
\textit{A generating function is a device somewhat similar to a bag. Instead of carrying many little objects detachedly, which could be embarrassing, we put them all in a bag, and then we have only one object to carry, the bag}
\cite{Polya}.
\end{quote}

 \medskip

\noindent
However, the formal variable in the generating function can play
a more useful and interesting role.

The main result of this note (Theorem~\ref{thm:main}) is a correspondence between differential
operators in $z$ having the generating function $\psi(z)$ as an eigenfunction
and difference operators acting with the same eigenvalue on the
generated sequence.  A second result (Theorem~\ref{thm:orthog})
allows one to determine whether the sequence is orthogonal by applying
that inner product to the generating function itself\footnote{After
  this paper was submitted, it was pointed out to the authors (by
  another participant in this workshop) that the results of this
  theorem were implicitly used in \cite{usedit}.  Consequently, we
  cannot claim this result as original.  However, it appears not to be
  very well-known and we hope its inclusion here will help spread the
  word about this interesting way to demonstrate orthogonality.}.

Examples demonstrating both of these techniques are presented in
Section~\ref{sec:examples}.  It is our hope
that others are able to apply these results to the generating
functions that arise in their own research.

\subsection{Notation and Definitions}

Let $\R$ be a ring and $\R_0\subseteq\R$ be any subring. We 
will usually be interested in
the case that the elements of $\R$ are functions of a variable $x$ and
$\R_0$ consists of just the constant functions.
No assumptions are made about commutativity in $\R$ aside from
the fact that the formal variable $z$ introduced below commutes with all elements of
the ring.

The $\R$-valued function $\psi(z)$ 
of the complex variable $z$  is assumed to be analytic at $z=0$ with
power series representation
\begin{equation}
\psi(z)=\sum_{n=0}^{\infty} a_n z^n.\label{eqn:regulargf}
\end{equation}
(This necessarily implies the existence of a suitable topology on
$\R$, but it will play no role here besides allowing for the assumed
convergence of the infinite sum in $(\ref{eqn:regulargf})$, and so no
more will be said about it.)

Associated to the choice of the generating function $\psi(z)$ we have
infinite sequence $\{a_0,a_1,a_2,\ldots\}$ with $a_n\in\R$ for each $n\in\N\cup\{0\}$
define by the formula
 $\psi^{(n)}(0)=n! a_n$.
For later
convenience, we define $a_n=0$ to be the zero element of $\R$
when $n<0$.

The monomial differential operator $z^i \partial_z^j$ for
$i\in\Z$ and $j\in\N\cup\{0\}$ acts on a sufficiently differentiable $\R$-valued function 
$\gamma(z)$ 
according to the formula
\begin{equation}
z^i\partial_z^j(\gamma(z))=z^i\frac{\partial^j}{\partial z^j}(\gamma(z))=z^i\gamma^{(j)}(z).
\label{eqn:monomiald}
\end{equation}
The ring $\R_0[z,z^{-1},\partial_z]$ is made up of all finite linear
combinations of such monomial differential operators with ``constant''
coefficients
from $\R_0$ and their action can be extended linearly from
the action of those basis elements in \eqref{eqn:monomiald}.

If there exist 
$\lambda\in\R$ and $L\in \R_0[z,z^{-1},\partial_z]$ such that
\begin{eqnarray}
\label{eigenL}L(\psi(z))&=&\lambda\psi(z)\\\hbox{or}\qquad
L(\psi(z))&=&\psi(z)\lambda.\label{eigenR}
\end{eqnarray}
then we say that $\psi(z)$ is an eigenfunction for the differential
operator $L$ with eigenvalue $\lambda$.  More specifically, in the
non-commutative case we say that $\lambda$ is a left eigenvalue when
\eqref{eigenL} holds and that it is a right eigenvalue when
\eqref{eigenR} holds.

Consider any function
$f:\N\cup\{0\}\to\R_0$ with values in the ``ring of
constants'' $\R_0$ and any integer $i\in \Z$.
The corresponding elementary difference operator $f \S^i$ acts on
 the sequence
$\{a_n\}$ turning it into a new sequence of elements from $\R$ according to the rule
\begin{equation}
  f\S^i(a)_n=f(n)a_{n+i}.
\end{equation}
(which is zero if $n+i<0$ by an earlier assumption).  Any finite sum
of these elementary difference operators is a difference operator
whose action is extended linearly from the formula above.
If $\Omega$
is a difference operator satisfying $\Omega(a)_n=\lambda a_n$ (or alternatively
$\Omega(a)_n=a_n\lambda$) for every $n\in \N\cup\{0\}$ and some
fixed $\lambda\in\R$ then we say that $\{a_n\}$ is an eigensequence
for $\Omega$ with left (or alternatively right) eigenvalue $\lambda$.

\section{Difference Equations from Generating Eigenfunctions}
This section concerns a certain
difference operator $\Omega_L$ corresponding to the choice of any
differential operator $L\in\R_0[z,z^{-1},\partial_z]$:
\begin{definition}\label{def:LOmega}
Suppose $L\in \R_0[z,z^{-1},\partial_z]$ is an ordinary differential
operator in $z$ with singularities only at $z=0$.   Then it can be written in the form
$$
L=\sum_{i=\zmin}^{\zmax}\sum_{j=0}^{\dmax} c_{ij}z^{i}\partial_z^{j},
\qquad
\zmin<\zmax\in\Z,\ \dmax\in\N\cup\{0\}, c_{ij}\in\R_0.
$$
Associate to the choice of any such differential operator a
corresponding \textit{difference} operator $\Omega_L$ defined as
\begin{eqnarray*}
\Omega_L&=&\sum_{i=\zmin}^{\zmax}\sum_{j=0}^{\dmax} c_{ij}
\prod_{k=0}^{j-1}(n+j-i-k) \S^{j-i}  \\
  &=&\sum_{i=\zmin}^{\zmax}\sum_{j=0}^{\dmax} c_{ij}
(n+j-i)(n+j-i-1)\cdots(n-i+1) \S^{j-i}.
\end{eqnarray*}
\end{definition}

  \subsection{General Lemma}
  Before the main result which applies in the special case
  that $\psi(z)$ is an eigenfunction of $L$, we first state
a more general result showing a correspondence between the action of
those two operators \textit{without} making any assumptions about the
generating function.
As this lemma shows, 
the action of $L$ on any generating function $\psi$ can
    be elegantly described in terms of the corresponding difference operator $\Omega_L$:

\begin{lemma}\label{lem:main} For any differential operator $L\in\R_0[z,z^{-1},\partial_z]$ and
  the corresponding difference operator $\Omega_L$ as in
  Definition~\ref{def:LOmega} we have
$$
L(\psi)=\sum_{n=\zmin}^{\infty} \Omega_L(a)_nz^n.
$$
\end{lemma}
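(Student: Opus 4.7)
The plan is to compute $L(\psi)$ directly as a formal power series in $z$ and match the coefficient of $z^n$ against $\Omega_L(a)_n$. By linearity of $L$ in its basis expansion and the evident linearity of the map $L\mapsto\Omega_L$ from Definition~\ref{def:LOmega}, it suffices to verify the identity for a single monomial $L = c_{ij}z^i\partial_z^j$; the full statement then follows by summing over $(i,j)$.

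First I would expand $z^i\partial_z^j\psi(z)$ term-by-term from the power series \eqref{eqn:regulargf}, obtaining
$$
z^i\partial_z^j\psi(z) \;=\; \sum_{m=0}^\infty a_m\, m(m-1)\cdots(m-j+1)\, z^{m-j+i},
$$
where the falling factorial automatically annihilates the terms with $m<j$.

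Next I would re-index by setting $n = m-j+i$, equivalently $m=n+j-i$. The power $z^{m-j+i}$ becomes $z^n$, the coefficient $a_m$ becomes $a_{n+j-i}$ (matching $\S^{j-i}$), and the falling factorial becomes $(n+j-i)(n+j-i-1)\cdots(n-i+1) = \prod_{k=0}^{j-1}(n+j-i-k)$, which is precisely the multiplier of $\S^{j-i}$ prescribed by Definition~\ref{def:LOmega}. Thus the $(i,j)$-contribution to the coefficient of $z^n$ in $L(\psi)$ agrees term-for-term with the $(i,j)$-contribution to $\Omega_L(a)_n$.

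Finally I would reconcile the range of summation. The substitution naturally makes the re-indexed sum start at $n=i$, but to obtain the uniform lower bound $n\geq\zmin$ stated in the lemma, one must verify that the extra terms with $\zmin\leq n<i$ vanish. I expect this bookkeeping to be the only delicate step, though it is mild: when $i-j\leq n<i$ the factor range $n-i+1,\ldots,n+j-i$ contains $0$, so the falling factorial kills the term; and when $\zmin\leq n<i-j$ one has $n+j-i<0$, so $a_{n+j-i}=0$ by the zero-extension convention on $\{a_n\}$. Summing the verified monomial identity over $(i,j)$ and interchanging the finite double sum with the infinite sum over $n$ then yields the claim.
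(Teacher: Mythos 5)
Your proposal is correct and follows essentially the same route as the paper: term-by-term differentiation of the power series, the falling-factorial identity $\partial_z^j(z^m)=m(m-1)\cdots(m-j+1)z^{m-j}$, the re-indexing $n=m-j+i$, and the same bookkeeping showing that the terms adjoined to reach the uniform lower bound $n=\zmin$ vanish (zero falling factorial, or $a_{n+j-i}=0$). The only cosmetic difference is that you reduce to a single monomial $c_{ij}z^i\partial_z^j$ before summing, whereas the paper carries the full triple sum throughout; both handle the interchange of summation the same way.
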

\begin{proof}
For $j,m\in\N\cup\{0\}$ one has
\begin{equation}\label{eqn:simple}
\partial_z^{j}(z^m)
=
m(m-1)\cdots(m-j+1)z^{m-j}.
\end{equation}
Note that this formula is valid even in the case that $m<j$ because in
that case $z^m$ is in the kernel of the operator and a factor of
$(m-m)$ appears in the product on the right side, so they are both zero.

It then follows by linearity that
\begin{eqnarray*}
  L(\psi)&=&\sum_{i=\zmin}^{\zmax}\sum_{j=0}^{\dmax}c_{ij}z^i\partial_z^j\psi(z)\\
         &=&\sum_{i=\zmin}^{\zmax}\sum_{j=0}^{\dmax}
             c_{ij}z^i\partial_z^j\left( \sum_{m=0}^\infty a_m z^m\right)
  \hexpl{expanding $\psi$ using the index $m$}\\
         &=&\sum_{i=\zmin}^{\zmax}\sum_{j=0}^{\dmax}\sum_{m=0}^\infty c_{ij}a_mz^i\left(\partial_z^jz^m\right)\\
  &=&\sum_{i=\zmin}^{\zmax}\sum_{j=0}^{\dmax}\sum_{m=0}^\infty c_{ij}a_mz^i\left( m(m-1)\cdots(m-j+1)z^{m-j}\right)
\hexpl{by \eqref{eqn:simple}}\\
\phantom{L(\psi)}  &=&\sum_{i=\zmin}^{\zmax}\sum_{j=0}^{\dmax}\sum_{m=0}^\infty c_{ij}a_m
      m(m-1)\cdots(m-j+1)z^{m-j+i}\\
  &=&\sum_{i=\zmin}^{\zmax}\sum_{j=0}^{\dmax}\sum_{n=i-j}^\infty c_{ij}a_{n+j-i}
      (n+j-i)(n+j-i-1)\cdots(n-i+1)z^{n}.
\end{eqnarray*}

The final equality above is obtained by rewriting the inner-most sum
in terms of the new variable $n=m-j+i$.
After doing so, this sum begins at $n=i-j$.  However, the equality would
still be true if it started at a value \textit{lower} than $i-j$ since
$a_n=0$ for $n<0$.  In addition, the smallest power of $z$ which can
appear in this expression with a non-zero coefficient is $z^{\zmin}$
because $\psi$ is expanded only in non-negative powers of $z$, their
derivatives only involve non-negative powers of $z$, and those are
multiplied by $z^i$ for $i\geq \zmin$.  So, the sum in $n$ can be
taken to start at $n=\zmin$.  Then, since there are only a finite
number of terms in these sums having $z^n$ for any fixed value of $n$,
there can be no issues about reordering that would prevent us from
reordering the sums.
Consequently, we have
\begin{eqnarray*}
L(\psi)& =&\sum_{i=\zmin}^{\zmax}\sum_{j=0}^{\dmax}\sum_{n=\zmin}^\infty c_{ij}a_{n+j-i}
      (n+j-i)(n+j-i-1)\cdots(n-i+1)z^{n}\\
&=&\sum_{n=\zmin}^{\infty}\left(
\sum_{i=\zmin}^{\zmax}\sum_{j=0}^{\dmax} 
c_{ij}(n+j-i)(n+j-i-1)\cdots(n-i+1)a_{n+j-i}\right)z^{n}.\\
\end{eqnarray*}
The claim follows from the observation that the coefficient of each
term $z^n$
in the formula above is precisely $\Omega_L(a)_n$.
\end{proof}

For example, note that $L=z\partial_z$
corresponds to the difference operator $\Omega_L=n$ which simply
multiplies $a_n$ by its index.  That makes sense because
$$
z\partial_z(\psi(z))=z\left(\sum_{n=0}^{\infty} na_nz^{n-1}\right)
=\sum_{n=0}^{\infty}na_n z^{n}.
$$
This is true regardless of the series $\{a_n\}$.
However, our primary interest below will be on the case in which
$\psi(z)$ is an \textit{eigenfunction} for a differential operator
$L$, and whether that is true clearly depends not only on the choice of operator but
also on the sequence.

\subsection{When the Generating Functions is an Eigenfunction of $L$}

Again consider a differential operator $L\in\R_0[z,z^{-1},\partial_z]$
and its corresponding difference operator $\Omega_L$ as in
Definition~\ref{def:LOmega}.  However, we now further assume that the
generating function $$\psi(z)=\sum_{n=0}^{\infty}a_nz^n$$
is an eigenfunction for $L$ satisfying either $L(\psi)=\lambda\psi$ or
$L(\psi)=\psi\lambda$ for some $\lambda\in\R$.
As an easy corollary of Lemma~\ref{lem:main} we then get:

\begin{theorem}\label{thm:main}
If the generating function $\psi(z)$ is an eigenfunction for $L$ with
(left or right) eigenvalue $\lambda$ then the generated sequence $\{a_0,a_1,a_2,\ldots\}$ is an eigensequence
for the difference operator $\Omega_L$ with (left or right) eigenvalue $\lambda$.
\end{theorem}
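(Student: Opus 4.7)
The plan is to obtain this as a direct consequence of Lemma~\ref{lem:main} by equating coefficients of $z^n$ on the two sides of the eigenfunction equation. Concretely, Lemma~\ref{lem:main} gives
$$
L(\psi)=\sum_{n=\zmin}^{\infty}\Omega_L(a)_n z^n,
$$
so I need only compare this expansion with the eigenfunction hypothesis $L(\psi)=\lambda\psi$ (or $L(\psi)=\psi\lambda$).

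First, using that $z$ commutes with every element of $\R$, I can expand the right-hand side as $\lambda\psi=\sum_{n=0}^{\infty}(\lambda a_n)z^n$ in the left-eigenvalue case, and as $\psi\lambda=\sum_{n=0}^{\infty}(a_n\lambda)z^n$ in the right-eigenvalue case. Matching coefficients of $z^n$ for each $n\geq 0$ then gives $\Omega_L(a)_n=\lambda a_n$ (respectively $\Omega_L(a)_n=a_n\lambda$), which is exactly the statement that $\{a_n\}$ is an eigensequence for $\Omega_L$ with the indicated left (respectively right) eigenvalue.

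The only point that requires a brief comment is the discrepancy in indexing: the sum produced by Lemma~\ref{lem:main} begins at $n=\zmin$, which may be negative, whereas $\lambda\psi$ and $\psi\lambda$ involve only non-negative powers of $z$. Equating coefficients for $n<0$ forces $\Omega_L(a)_n=0$ in that range, which is automatic from the convention $a_n=0$ for $n<0$ together with the form of $\Omega_L$; in any case this range is irrelevant to the conclusion, since the definition of an eigensequence only asks for the identity at $n\in\N\cup\{0\}$.

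I do not expect any real obstacle: the bookkeeping of sums and the interplay between $\psi$ and its coefficients has already been absorbed into Lemma~\ref{lem:main}, and the remaining content is simply matching coefficients, with the noncommutative case handled uniformly because $z$ is assumed central.
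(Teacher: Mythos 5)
Your proposal is correct and follows essentially the same route as the paper: invoke Lemma~\ref{lem:main}, expand $\lambda\psi$ (or $\psi\lambda$) as a power series using the centrality of $z$, and equate coefficients, noting that the terms with $n<0$ vanish and only $n\geq 0$ matters for the eigensequence definition. No gaps.
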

\begin{proof}
Suppose $L(\psi)=\lambda\psi$.
Then
  $$
  L(\psi)=\lambda\psi=\sum_{n=0}^{\infty}\lambda a_nz^n
  $$
  and from the lemma we have
$$
L(\psi)=\sum_{n=\zmin}^{\infty}\Omega_L(a)_nz^n.
$$
Setting the coefficients equal we get that $\Omega_L(a)_n=0$ for $n<0$
and $\Omega_L(a)_n=\lambda a_n$ for $n\geq0$, which is precisely the
definition of being an eigensequence.  The same argument also
works with the eigenvalue $\lambda$ appearing on the right.
\end{proof}

\section{Determining Orthogonality Using Generating Functions}
Suppose that $\R$ is actually a \relax{vector space} with inner
product $\langle\cdot,\cdot\rangle_\R$.  A natural question to then ask
about the sequence $\{a_0,a_1,a_2,\ldots\}$ is whether it is
\textit{orthogonal} in the sense that there exists some sequence
$\{\omega_0,\omega_1,\ldots\}$ so that
\begin{equation}\label{eqn:orthogdef}
\langle a_m,a_n\rangle_\R=\omega_n\delta_{mn}
\end{equation}
for all $m,n\in\N\cup\{0\}$.
The following result provides a means to answer that question using
the generating function:
\begin{theorem}\label{thm:orthog}
Let $\psi(z)$ be a regular generating function for the sequence
$\{a_0,a_1,\ldots\}$ as in \eqref{eqn:regulargf}.  Then the sequence
is orthogonal with respect to the inner-product
$\langle\cdot,\cdot\rangle_\R$ if and only if there exists a function
$\theta(z)$ such that
$$
\langle \psi(w),\psi(z)\rangle_\R=\theta(wz).
$$
Furthermore, if that is the case, then the function $\theta(z)$ is a
regular generating function for the sequence $\{\omega_0,\omega_1,\ldots\}$.
\end{theorem}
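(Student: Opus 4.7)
My plan is to expand both sides as formal double power series in the commuting variables $w$ and $z$ and then read off what the equality of the two sides says at the level of coefficients. Specifically, using linearity of $\langle\cdot,\cdot\rangle_\R$ in each slot together with the assumed convergence of the series for $\psi$ (and noting that $w,z$ commute with every element of $\R$), one obtains
$$
\langle\psi(w),\psi(z)\rangle_\R
=\Bigl\langle \sum_{m=0}^{\infty}a_m w^m,\ \sum_{n=0}^{\infty}a_n z^n\Bigr\rangle_\R
=\sum_{m=0}^{\infty}\sum_{n=0}^{\infty}\langle a_m,a_n\rangle_\R\, w^m z^n.
$$

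Next I would record the elementary observation that a formal (or analytic) double power series $\sum_{m,n\geq 0}c_{m,n}\,w^m z^n$ depends on $w$ and $z$ only through the product $wz$ if and only if $c_{m,n}=0$ for $m\neq n$; and in that case it equals $\theta(wz)$ where $\theta(z):=\sum_{n=0}^\infty c_{n,n}z^n$. Applied to the expansion above with $c_{m,n}=\langle a_m,a_n\rangle_\R$, this immediately gives both directions of the equivalence: the existence of $\theta$ with $\langle\psi(w),\psi(z)\rangle_\R=\theta(wz)$ is equivalent to $\langle a_m,a_n\rangle_\R = 0$ for $m\neq n$, and in that case the diagonal entries $\omega_n:=\langle a_n,a_n\rangle_\R$ are exactly the coefficients of $\theta$, which is the definition of orthogonality in \eqref{eqn:orthogdef} together with the final assertion that $\theta$ generates the sequence $\{\omega_n\}$.

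The only real obstacle is justifying the interchange of the two infinite sums with the inner product in the first displayed equation. If one prefers to stay in the analytic category this requires a continuity/joint-convergence assumption on $\langle\cdot,\cdot\rangle_\R$ compatible with the topology alluded to in the paragraph following \eqref{eqn:regulargf}; but if one is content to work with formal power series in the two independent commuting variables $w$ and $z$, the interchange is immediate from the bilinearity (or sesquilinearity) of the inner product and no additional hypothesis is needed. Either way, once that step is in place the remainder of the argument is a direct coefficient comparison.
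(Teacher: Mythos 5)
Your proposal is correct and follows essentially the same route as the paper's own proof: expand $\langle\psi(w),\psi(z)\rangle_\R$ into the double series $\sum_{m,n}\langle a_m,a_n\rangle_\R w^m z^n$ and observe that this is a function of $wz$ exactly when the off-diagonal coefficients vanish, with the diagonal entries giving the coefficients of $\theta$. You are slightly more explicit than the paper in isolating the diagonal-support lemma and in flagging the interchange of the sums with the inner product, but the substance of the argument is the same.
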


\begin{proof}
Expanding the generating functions and distributing\footnote{To avoid
  worrying about complex conjugates, we
  assume $w,z\in\mathbb{R}$.} the inner product
we have
\begin{eqnarray*}
  \langle \psi(w),\psi(z)\rangle_\R&=&
                                               \left\langle \sum_{m=0}^\infty
                                               a_mw^m,\sum_{n=1}^{\infty}
                                               a_nz^n\right\rangle_\R\\
  &=&\sum_{m=0}^\infty\sum_{n=0}^\infty\langle a_m,a_n\rangle_\R w^mz^n.
\end{eqnarray*}
If the sequence satisfies \eqref{eqn:orthogdef} then the terms for which $m\not=n$ have
a coefficient of zero and the result is
\begin{equation}
  \langle \psi(w),\psi(z)\rangle_\R=\sum_{n=0}^\infty
  \omega_{n}(wz)^n=\theta(wz)\label{eqn:inprod}
  ,
\end{equation}
  which as claimed is a function of the product $wz$ that is a
  generating function for the sequence $\{\omega_0,\omega_1,\ldots\}$.

On the other hand, if the sequence $\{a_0,a_1,\ldots\}$ is
\textit{not} orthogonal, then the right hand side of the expression
\eqref{eqn:inprod} involves a product of the form $w^mz^n$ where
$m\not=n$ and thus is not of the form $\theta(wz)$ for any function $\theta$.
\end{proof}

Section~\ref{sec:examp2} below includes an example in which this result
is used to demonstrate the orthogonality of a sequence of polynomials.

\section{Examples}\label{sec:examples}
\subsection{A Sequence of Numbers}
The function $\psi(z)=(z-1)e^z$ is an eigenfunction for the
differential operator $L=\partial_z^2-\frac{2}{z}\partial_z$ with
eigenvalue $\lambda=1$.
In this simplest example where $\R=\R_0=\mathbb{R}$ we will just get
from this eigenfunction a
difference equation satisfied by a sequence of real numbers.

Expanding the function as a power series in $z$ we see that
$$
\psi(z)=-1+\frac{z^2}{2}+\frac{z^3}{3}+\frac{z^4}{8}+\frac{z^5}{30}+\frac{z^6}{144}+\cdots
$$
and hence this is a generating function for a sequence which begins
$a_0=-1$, $a_1=0$, 
$a_2=\frac12$, $a_3=\frac13$, $a_4=\frac18$, $a_5=\frac1{30}$.

``Translating'' the differential operator $L$ into a difference
operator according to Definition~\ref{def:LOmega} we get
$$
\Omega_L=(n^2+n-2)\S^2.
$$
Theorem~\ref{thm:main} then assures us that applying this difference operator to the elements of the sequence will multiply
them by $\lambda=1$ and hence leave them unchanged.  Indeed, we can
verify that this is the case for the terms at the start of the sequence
since:
$$
\Omega_L(a)_0=(0^2+0-2)a_2=-2\times\frac12=-1=a_0,
$$
$$\Omega_L(a)_1=(1^2+1-2)a_3=0\times\frac13=0=a_1,
$$
$$
\Omega_L(a)_2=(2^2+2-2)a_4=4\times\frac18=\frac12=a_2,
$$
and so on.

\subsection{Exceptional Hermite Polynomials}\label{sec:examp2}
As readers of this volume of Contemporary Mathematics surely know, the classical
orthogonal polynomials are important examples of sequences that are
often written in terms of generating functions.  Moreover, the
three-term recursion relations that follow as a consequence of their
orthogonality make them eigensequences for difference operators, in
the terminology of this paper.

Exceptional orthogonal polynomials are a generalization of the
classical orthogonal polynomials in which a finite number of positive
integer degrees are missing \cite{UllateKamranMilson2009}.  The
Exceptional Hermite Polynomials (XHPs) are a special class which was first
studied in 2014 \cite{UllateGrandatiMilsonXHPS}.  In fact, there is a
family of XHPs associated to each Young diagram. More recently, it
was found that for every Young diagram there is a  KP Wave Function
from $Gr_0$ that is a generating function for the corresponding family
of XHPs \cite{MilsonKasman2020}.  Moreover, the wave functions from $Gr_0$ are all
eigenfunctions for differential operators in $\mathbb{C}[z,z^{-1},\partial_z]$
\cite{Wilson1993}.  Here we will consider just one example to
illustrate the two main theorems of this note.


The ordinary generating function
$$
\psi(x,z)= \left(2 x^2 z^2-4 x z+z^2+4\right) e^{x z-\frac{z^2}{4}}
$$
produces the sequence:
$$
a_0=4,\ a_1=a_2=0,\ a_3=\frac{2 x^3}{3}+x,\
a_4=\frac{x^4}{2}+\frac{x^2}{2}-\frac{1}{8},\
a_5=\frac{x^5}{5}-\frac{x}{4},\cdots
$$
which includes a polynomial of every degree \textit{except} degrees
$1$ and $2$.


Now, we will use Theorem~\ref{thm:orthog} to demonstrate the
orthogonality of this sequence of polynomials with respect to the
inner product
   \[
   \langle p(x),q(x)\rangle
   =\int_{-\infty}^{\infty}p(x)q(x)\frac{e^{-x^2}}{(x^2+1/2)^2}\,dx.
 \]
  To that end, set
  \begin{gather*}
    S=w+z,\quad P= \frac12 wz-1\\
    \phi(x,z) = \frac{z^2}{2}+ \frac{1-xz}{x^2+1/2}.
  \end{gather*}
An elementary calculation shows that
\begin{equation}
  \partial_x \left[ \frac{2xP-S}{2x^2+1} e^{x(S-x)}\right] =
-  \left(\phi(x,w)\phi(x,z) - P^2-1\right) e^{x(S-x)}.
\end{equation}
It follows that,
\[ \int_{-\infty}^\infty
  \phi(x,w)\phi(x,z) e^{x(S-x)}\,dx = ( P^2+1)\int_{-\infty}^\infty
  e^{x(S-x)}dx =( P^2+1) e^{\frac{S^2}4}\sqrt\pi.\]
Since
\[ \frac{\psi(x,z)}{x^2+1/2} = 4 \phi(x,z) e^{xz - z^2/4} \]
we have that
\begin{gather*}
 \psi(x,w) \psi(x,z) \frac{e^{-x^2}}{(x^2+1/2)^2}  = 16
 \phi(x,w)\phi(x,z) e^{x(S-x)} e^{-\frac{w^2}4-\frac{z^2}4}\\
\hbox{and }   \int_{-\infty}^\infty  \psi(x,w) \psi(x,z)
   \frac{e^{-x^2}}{(x^2+1/2)^2} \,dx = 4e^{\frac{wz}{2} }(w^2z^2-4 w z+8) \sqrt{\pi}.
 \end{gather*}
That expression depends only on the product $wz$.  Hence, it follows from
Theorem~\ref{thm:orthog} that
$$
\langle a_m,a_n\rangle=\left.\frac{d^n}{dz^n}\frac{4e^{\frac{z}{2} }(z^2-4  z+8) \sqrt{\pi}}{n!}\right|_{z=0}\,\delta_{mn}= \frac{16(n-1)(n-2)}{2^n n!}\sqrt\pi\,\delta_{mn}.
$$

Because of the orthogonality, it is not surprising that the
polynomials satisfy a set of ``recursion relations'', and because of
the missing degrees it is not surprising that it involves a
commutative algebra of higher-order difference operators.  But, it is
not immediately clear what is the corresponding algebra of difference
operators.

To elucidate the commutative algebra of difference operators and the
corresponding eigenvalues, let us observe that
\[ \partial_x \psi(x,z) = (1+2x^2)z^3 e^{xz-\frac{z^2}4}.\] Let $\R$
denote the codimension-2 vector space of polynomials $\lambda(x)$ with
the property that $\lambda'(x)$ is divisible by $1+2x^2$.  In fact,
$\R$ is a ring because if $f,g\in\R$, then $(fg)' = f' g + f g'$ is
also divisible by $1+2x^2$.  As a vector space, $\R$ is
infinite-dimensional.  However, as a commutative ring, $\R$ is
generated by
\[ \lambda_0 = 1, \quad \lambda_3 = x^3+\frac32 x,\quad
  \lambda_4 = x^4+x^2,\quad \lambda_5 =  x^5 -\frac54 x.\]

Thus, $\psi(x,z)$ may be regarded as a kind of generating function for
$\R$ with the coefficients being a basis that is orthogonal with
respect to the above inner-product.  Since $\R$ is a ring, for every
$n\ne 1,2$ the coefficients of $\lambda_n(x) \psi(x,z)$ satisfies the
same defining property, namely their derivative is divisible by
$1+2x^2$.   It is therefore very plausible that the coefficients of
$\lambda_n(x) \psi(x,z)$ can be obtained by applying a difference
operator to the orthogonal sequence $a_n(x)$.   However, the key to
obtaining these difference operators is
Theorem~\ref{thm:main} together with the fact
that for every $n\ne 1,2$ we have that
\[ L_n(z,\partial_z)\psi(x,z) = \lambda_n(x) \psi(x,z) \] where
$L_n(z,\partial_z)$ is a differential operator of order $n$.  This
immediately implies that the $L_n,\; n\ne 1,2$ span a commutative
algebra of differential operators.

The procedure for constructing the operators $L_n$ and the
corresponding difference operators is given in
\cite{MilsonKasman2020}.  Here are the operators corresponding to
eigenvalues $\lambda_3,\lambda_4, \lambda_5$, as defined above:
\begin{align*}
  L_3
  &= \partial_z^3 + \left( \frac32 z-6 z^{-1}\right) \partial_z^2 +
    \left( \frac34 z^2 -3 + 12 z^{-2}\right) \partial_z + \frac18
    z^3,\\
  L_4
  &= \partial_z^4+\left(2z-8z^{-1}\right) \partial_z^3 + \left(
    \frac32 z^2 -8 + 28 z^2 \right) \partial_z^2, \\
  &\qquad + \left(\frac12 z^3
    -2z + 12 z^{-1} -40 z^{-3}\right) \partial_z +
    \frac1{16}z^4+\frac14\\
  L_5
  &= 
    \partial_z^5+
    \left(\frac52z-10z^{-1}\right) \partial_z^4+
    \left(\frac52z^2-15+50z^{-2}\right) \partial_z^3\\
  &\quad+
    \left(\frac54 z^3-\frac{15}{2} z+
    45z^{-1}-140z^{-3}\right) \partial_z^2\\
  &\quad+ \left(\frac{5}{16} z^4
    -\frac54 z^2+10-60z^{-2}+180z^{-4}\right)
    \partial_z +\frac{1}{32}z^5.
\end{align*}
Each of the above differential operators $L_k$ of order $k$
corresponds to a difference operator $\Omega_{k} = \Omega_{L_k}$ of
order $2k$.  These, difference operators commute and generate a ring
of  recurrence relations for the exceptional Hermite polynomials:
\begin{equation}
  \label{eq:recrel}
\Omega_{k}(a)_n=\lambda_k a_n.
\end{equation}
Using the construction described  in
Theorem~\ref{thm:main} the difference operators in question are:
\begin{align*}
  \Omega_3
  &=(n-2) (n-1) (n+3)
    \S^3+\frac{3}{2} (n-2) (n+1) \S^1+\frac{3}{4} (n-1)
    \S^{-1}+\frac18\S^{-3},\\
  \Omega_4
  &=
    (n-2) (n-1) (n+1) (n+4)
    \S^4+  2 (n-2) (n-1) 
    (n+2) \S^2\\
  &\quad +  \frac{1}{4} \left(6 n^2-14 n+1\right) 
    +\frac{1}{2}
    (n-2) \S^{-2}+\frac{1}{16}\S^{-4},\\
  \Omega_5
  &=
    (n-1) (n+1) (n+2) 
    (n+5) (n-2) \S^5+\frac{5}{2} (n-1) n (n+3) (n-2)
    \S^3\\
  &\quad +
    \frac{5}{2} (n+1) (n-2)^2 \S^1 +\frac{5}{16} (n-3)
    \S^{-3}+\frac{5}{4} (n-3) (n-1) 
    \S^{-1}+\frac{1}{32}\S^{-5}.
\end{align*}

The Classical Hermite polynomials $h_n(x),\; n=0,1,2,\ldots$ can also be
defined as the coefficients of the classical generating function
\[\psi_0(x,z) = e^{xz-\frac14 z^2} = \sum_{n=0}^\infty h_n(x)
  \frac{z^n}{n!},\]
and by means of the classical recurrence relation
\[ x h_n(x) = \frac12 h_{n+1}(x) + n h_{n-1}.\] Analogously, the
exceptional Hermite polynomials $a_n(x)$ can be recursively defined by
means of the recurrence relations \eqref{eq:recrel}.  Indeed,
the recurrences
\begin{align*}
  \lambda_3 a_0 &= \Omega_3(a)_0 = 6 a_3\\
  \lambda_4 a_0 & = \Omega_4(a)_0 = 8 a_4+ \frac14 a_0 \\
  \lambda_5 a_0 &= \Omega_5(a)_0 = 20 a_5\\
  \lambda_3 a_3 &= \Omega_3(a)_3 = 12 a_6+6 a_4 + \frac18 a_0
\end{align*}
serve to define $a_3,a_4,a_5,a_6$.  However, $a_7$ can be defined
recursively in two ways:
\begin{align*}
  \lambda_4 a_3 &= \Omega_4(a)_3 = 56 a_7+20 a_5 + \frac{13}4 a_3\\
  \lambda_3 a_4 &= \Omega_3(a)_4 = 42 a_7+15 a_5+\frac94 a_3.
\end{align*}
The fact that these two definitions give the same value of $a_7$ is a
consequence of the fact that the $\Omega_3,\Omega_4$ commute,
which in turn is a consequence of the fact that $L_3$ and $L_4$ commute.


Although this procedure can be applied to any of the families of XHPs, we will
do so here in just one case as an example.  (For information about the
general case, consult \cite{MilsonKasman2020} or the paper by Luke
Paluso in this same volume of Contemporary Mathematics.)

\subsection{Non-Commutative Example}\label{sec:examp3}


The function
$$
\psi=\left(
\begin{array}{cc}
 x z-1 & -z^2 \\
 0 & x z \\
\end{array}
\right)e^{xz}
$$
is a generating function for a sequence of matrix polynomials which
begins
$$
a_0= \left(
\begin{array}{cc}
 -1 & 0 \\
 0 & 0\\
\end{array}
\right),
a_1= \left(
\begin{array}{cc}
 0 & 0 \\
 0 & x \\
\end{array}
\right),
a_2=\left(
\begin{array}{cc}
 \frac{x^2}{2} & -1 \\
 0 & x^2 \\
\end{array}
\right)
,\ a_3=\left(
\begin{array}{cc}
 \frac{x^3}{3} & -x \\
 0 & \frac{x^3}{2} \\
\end{array}
\right).
$$
It happens to be an eigenfunction for
the matrix differential operator
\begin{eqnarray*}
L&=&\left(
\begin{array}{cc}
 1 & 0 \\
 0 & 1 \\
\end{array}
\right)\partial_z^3
+
\left(
\begin{array}{cc}
 -\frac{3}{z} & 0 \\
 0 & -\frac{3}{z}
\end{array}
\right)\partial_z^2
+
\left(
\begin{array}{cc}
 \frac{3}{z^2} & 3 \\
 0 & \frac{6}{z^2}\\
\end{array}
\right)\partial_z
+
\left(
\begin{array}{cc}
 0 & -\frac{6}{z} \\
 0 & -\frac{6}{z^3}\\
\end{array}
\right) 
\end{eqnarray*}
satisfying the eigenvalue equation
$
L\psi=x^3\psi.
$

Theorem~\ref{thm:main} then assures us that the generated sequence is
an eigensequence for the difference operator
\begin{eqnarray*}
\Omega_L&=&\left(
\begin{array}{cc}
 n^3+3 n^2-n-3 & 0 \\
 0 & n \left(n^2+3 n+2\right) \\
\end{array}
\right)
\S^3+\left(
\begin{array}{cc}
 0 & 3 (n-1) \\
 0 & 0 \\
\end{array}
\right)
\S
\end{eqnarray*}
with the same eigenvalue $x^3$.
For example, applying the difference operator to $a_2$ we get
$$
\Omega_L(a)_2=\left(
\begin{array}{cc}
 15 &0 \\
 0 & 24 \\
\end{array}
\right)a_5+\left(
\begin{array}{cc}
0 &3 \\
 0 & 0 \\
\end{array}
\right)a_3
=\left(
\begin{array}{cc}
 \frac{x^5}{2} & -x^3 \\
 0 & x^5 \\
\end{array}
\right)=x^3a_2.
$$

\section{Concluding Remarks}

In the paper \cite{MilsonKasman2020} the operators having the
generating function for Exceptional Hermite Polynomials as an
eigenfunction were ``translated'' into the recursion relations.   The
main 
goal of this note is to explain and generalize the process by which that took place.
We also presented a procedure by which one can demonstrate
orthogonality of a sequence by using its generating function.  (See
\cite{usedit} and the footnote in the introduction.)
It is hoped that others may find it useful to apply
Theorems~\ref{thm:main} and \ref{thm:orthog} in their own research.

For the sake of clarity, some generalizations and tangential topics
have been ignored in the body of the paper above.  In particular:
\begin{itemize}
\item This paper considered only \textit{ordinary} generating
functions in which the coefficients in the power series expansion are precisely
the sequence of interest.  More generally, it is common to consider
generating functions involving some other known sequence
$\{c_0,c_1,c_2,\ldots\}$ with $c_i\not=0$ so that
$$
\psi(z)=\sum_{n=0}^{\infty} c_na_n z^n.
$$
For instance, many sequences have generating functions of this form
with $c_n=1/(n!)$ or $c_n=1/n$.  The main results could be stated so
as to be valid for these more general generating functions as well.
One could simply rescale the coefficients of $\Omega_L$
in Theorem~\ref{thm:main} in order to handle the general case.
\item 
Similarly, the results could be generalized to handle the case of a
generating function whose Laurent series representation (including
negative powers of the formal variable) has the sequence of interest
as coefficients.
\item In the case of the Exceptional Orthogonal Polynomials, it is
  important that the terms in the sequence which are equal to zero do
  not arise in the recursion relations.  In particular, one wants the
  additional property that all of the terms which appear in the
  difference equations obtained by applying one of the operators
  $\Omega_L$ to a non-zero polynomial in the sequence area also
  non-zero polynomials.  See
  \cite{MilsonKasman2020} for a construction which takes this
  additional restriction into account.
\item The generating functions used in Sections~\ref{sec:examp2} and
  \ref{sec:examp3} also happen to be eigenfunctions for differential
  operators in the variable $x$.  Although this \textit{bispectrality} plays an essential
  role in the papers 
  \cite{MilsonKasman2020} and \cite{Wilson1993}, it does not seem to be a fundamental feature
  of the main result in this paper and hence was not mentioned above.
\end{itemize}
We thank organizers/editors Ahmad Barhoumi, Roozbeh Gharakhloo, and Andrei Martinez-Finkelshein 
for their efforts and thank the other participants in the special session for their talks and
helpful feedback.

\bibliographystyle{amsplain}
\bibliography{kasman-conm.bib}

\end{document}